\newtheorem{theorem}{Theorem}
\newtheorem{lemma}[theorem]{Lemma}
\newtheorem{remark}{Remark}
\newtheorem{example}{Example}
\newtheorem{proposition}{Proposition}
\theoremstyle{definition}
\newtheorem{definition}{Definition}
\newtheorem{problem}{Problem}
\title{Information geometry and Frobenius algebra}
\author{Ruichao Jiang, Javad Tavakoli, Yiqiang Zhao}
\date{Oct 2020}
\begin{document}

\maketitle
\begin{abstract}
 We show that a Frobenius sturcture is equivalent to a dually flat sturcture in information geometry. We define a multiplication structure on the tangent spaces of statistical manifolds, which we call the statistical product. We also define a scalar quantity, which we call the Yukawa term. By showing two examples from statistical mechanics, first the classical ideal gas, second the quantum bosonic ideal gas, we argue that the Yukawa term quantifies information generation, which resembles how mass is generated via the 3-points interaction of two fermions and a Higgs boson (Higgs mechanism). In the classical case, The Yukawa term is identically zero, whereas in the quantum case, the Yukawa term diverges as the fugacity goes to zero, which indicates the Bose-Einstein condensation.
\end{abstract}
\section{Introduction}
Amari in his own monograph \cite{amari} writes that major journals rejected his idea of dual affine connections on the grounds of their nonexistence in the mainstream Riemannian geometry. Later on, this dualistic structure turned out to be useful in information science, e.g. EM algorithms. The aim of this paper is to show the Frobenius algebra structure in information geometry and thus give a physical meaning for the Amari-Chentsov tensor.

In information geometry, instead of the usual Levi-Civita connection, a nonmetrical connections $\nabla$ and its dual $\nabla^*$ are used. Associated with these two connections is a mysterious tensor, called the Amari-Chentsov tensor (AC tensor) and a one-parameter family of flat connections, called the $\alpha$-connections. Neither the AC tensor nor the $\alpha$-connections are present in Riemannian geometry.

Frobenius algebra is an algebraic structure discovered independently by Saito and by Dubrovin. It turns out to be an extremely general structure appearing in seemingly unrelated fields of mathematics and physics, e.g.
\begin{itemize}
    \item Physics: Topological field theory \cite{dubrovin} and conformal field theory \cite{fuchs};
    \item Algebraic geometry: Mirror symmetry and quantum cohomology \cite{ruan}, and unfolding of singularities of complex hypersurfaces \cite{saito};
    \item Integrable systems: Painlev\'e IV \cite{dubrovin2} and Hamiltonian systems \cite{dubrovin3}
\end{itemize}
At page 311 of \cite{dubrovin2}, Dubrovin himself has remarked on the connection between mathematical statistics and Frobenius algebra. In this paper, we will show that the dually-flat structure and $\alpha$-structure in information geometry are equivalent to a Frobenius algebra structure on the tangent bundle. The Frobenius algebra gives rise to a product on every tangent plane. This product structure completely characterizes the AC tensor. We also extract a scalar quantity out of the AC tensor. We call this scalar the Yukawa information and we give two examples.

The organization of this paper is as follows: Section 2 is a short introduction to information geometry; Section 3 is a short introduction to Frobenius algebra; Section 4 shows the equivalence of the Frobenius algebra sturcture and dually flat structure; in Section 5 we define the statistical product and the Yukawa term and we give two examples; Section 6 is the discussion and we propose a further problem.
\section{Information geometry}
In this section, a concise description of information geometry is given. The exposition here is not complete but is geared to show the relationship between information geometry and Frobenius algebra. For a complete introduction, see the monograph \cite{amari}.
\begin{definition}[Statistical Manifold]
A statistical manifold is a differential manifold $M$ equipped with a Riemannian metric $g_{ij}$ and a totally-symmetric 3rd-order tensor $C_{ijk}$, called the Amari-Chentsov tensor.
\end{definition}
Alternatively, one can define a statistical manifold to be 
\begin{definition}\label{dualconnection}
A statistical manifold is Riemannian manifold $(M,g)$ with a torsion-free affine connection $\nabla$ and a dual connection defined by
\begin{equation*}
    \nabla^*=2^{LC}\nabla-\nabla,
\end{equation*}
where $^{LC}\nabla$ is the Levi-Civita connection.
\end{definition}
The following result shows that these two definitions are equivalent
\begin{proposition}
\begin{align*}
    &C_{ijk}=\Gamma^*_{ijk}-\Gamma_{ijk},\\
    &^{LC}\Gamma_{ijk}=\frac{1}{2}\left(\Gamma_{ijk}+\Gamma^*_{ijk}\right).
\end{align*}
\end{proposition}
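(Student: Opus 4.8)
The plan is to work in a local coordinate chart $(x^i)$ and to lower the contravariant index of each connection with the metric, writing $\Gamma_{ijk}=g(\nabla_{\partial_i}\partial_j,\partial_k)=g_{kl}\Gamma^{l}_{ij}$ and likewise $\Gamma^*_{ijk}=g(\nabla^*_{\partial_i}\partial_j,\partial_k)$. The second identity then comes essentially for free: the defining relation $\nabla^*=2\,^{LC}\nabla-\nabla$ is linear, so pairing it against $\partial_k$ with the metric gives $\Gamma^*_{ijk}=2\,^{LC}\Gamma_{ijk}-\Gamma_{ijk}$, and solving for $^{LC}\Gamma_{ijk}$ yields $^{LC}\Gamma_{ijk}=\tfrac12(\Gamma_{ijk}+\Gamma^*_{ijk})$. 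Thus all the real content sits in the first identity, namely that $C_{ijk}:=\Gamma^*_{ijk}-\Gamma_{ijk}$ is the \emph{totally symmetric} tensor of the first definition.

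To prove total symmetry I would exhibit invariance under the two transpositions $(i\,j)$ and $(j\,k)$, which already generate the full symmetric group on three letters. For the first two slots I would use torsion-freeness: $\nabla$ is torsion-free by hypothesis, and $\nabla^*=2\,^{LC}\nabla-\nabla$ is an affine combination of torsion-free connections whose coefficients sum to $1$, hence is torsion-free as well; since the coordinate vector fields commute this gives $\Gamma_{ijk}=\Gamma_{jik}$ and $\Gamma^*_{ijk}=\Gamma^*_{jik}$, so that $C_{ijk}=C_{jik}$. For the last two slots I would pass through the metric-duality relation $\partial_i g_{jk}=\Gamma_{ijk}+\Gamma^*_{ikj}$, which expresses that $\nabla$ and $\nabla^*$ are mutually dual with respect to $g$. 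Feeding this into the covariant derivative of the metric, $(\nabla_{\partial_i}g)_{jk}=\partial_i g_{jk}-\Gamma_{ijk}-\Gamma_{ikj}$, cancels the $\Gamma_{ijk}$ terms and leaves $(\nabla_{\partial_i}g)_{jk}=\Gamma^*_{ikj}-\Gamma_{ikj}=C_{ikj}$. Because $g_{jk}$ is symmetric in $(j,k)$, so is its covariant derivative, and therefore $C_{ikj}=C_{ijk}$, which is exactly the missing symmetry.

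The step I expect to be delicate is reconciling the paper's defining relation $\nabla^*=2\,^{LC}\nabla-\nabla$ with the duality relation used above, since the two are equivalent only when $\nabla g$ is itself totally symmetric --- precisely the statistical (Codazzi) hypothesis built into a statistical manifold. I would therefore be explicit that $\nabla^*$ is taken to be the genuine dual connection, so that both torsion-freeness and duality are available simultaneously; without the Codazzi condition the naive combination $2\,^{LC}\nabla-\nabla$ need not be metric-dual to $\nabla$, and $C$ need not be symmetric in its last two indices. As a consistency check I would also run the computation through the Koszul formula $^{LC}\Gamma_{ijk}=\tfrac12(\partial_i g_{jk}+\partial_j g_{ik}-\partial_k g_{ij})$: substituting the duality relation and using the first-two-index symmetries reproduces $^{LC}\Gamma_{ijk}=\Gamma_{ijk}+\tfrac12 C_{ikj}$, and the total symmetry just established converts this into the claimed averaging identity, closing the loop between the two formulas.
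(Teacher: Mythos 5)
Your proof is correct, but there is essentially nothing in the paper to compare it to: the paper's entire proof of this proposition is the citation ``See page 133 in \cite{amari}.'' What you wrote is, in substance, the standard argument from that reference, spelled out: the averaging identity is immediate from lowering an index in $\nabla^*=2\,{}^{LC}\nabla-\nabla$; symmetry of $C_{ijk}$ in $(i,j)$ comes from torsion-freeness of both connections; symmetry in $(j,k)$ comes from the identity $(\nabla_{\partial_i}g)_{jk}=\Gamma^*_{ikj}-\Gamma_{ikj}$ together with the symmetry of $g$; and your closing Koszul-formula cross-check is accurate. So your proposal does not diverge from the intended proof --- it supplies the proof the paper omitted.

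The most valuable part of your write-up is the delicacy you flagged, and it deserves to be stated as a defect of the paper rather than of your argument. Definition 2 of the paper assumes only that $\nabla$ is torsion-free and sets $\nabla^*\coloneqq 2\,{}^{LC}\nabla-\nabla$. Under that definition alone, $C_{ijk}=\Gamma^*_{ijk}-\Gamma_{ijk}=2\bigl({}^{LC}\Gamma_{ijk}-\Gamma_{ijk}\bigr)$ is symmetric in $(i,j)$ but need not be symmetric in $(j,k)$: on $\mathbb{R}^2$ with the flat metric, take the torsion-free connection whose only nonvanishing symbol is $\Gamma^2_{11}=1$; then $C_{112}=-2$ while $C_{121}=0$. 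Hence the first displayed identity, read literally against the paper's own definitions, is false; it becomes true exactly when one adds the Codazzi condition that $\nabla g$ be totally symmetric --- equivalently, that the metric dual of $\nabla$ be torsion-free, equivalently that the affine combination $2\,{}^{LC}\nabla-\nabla$ coincide with the metric dual --- which is part of Amari's definition of a dualistic structure but was dropped in the paper. Your repair, taking $\nabla^*$ to be the genuine metric dual and invoking the statistical hypothesis so that duality and torsion-freeness are available simultaneously, is exactly the right one.
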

\begin{proof}
See page 133 in \cite{amari}.
\end{proof}
Since the connection $\nabla$ is usually not the Levi-Civita connection, the metric tensor $g$ is not preserved under the parallel transport $\Pi$ induced by $\nabla$. However, $g$ is dually preserved by $\nabla$ and its dual $\nabla^*$, i.e.
\begin{proposition}
Let $\Pi$ and $\Pi^*$ be the parallel transport induced by $\nabla$ and its dual $\nabla^*$ along some curve $\gamma$ from $x$ to $y\in M$. Then
\begin{equation*}
    g_x(X,Y)=g_y(\Pi^*X,\Pi Y),
\end{equation*}
where $X,Y\in T_xM$.
\end{proposition}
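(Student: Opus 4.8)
The plan is to reduce the statement to the \emph{dual metric-compatibility} (duality) identity
\[
    Z\,g(X,Y)=g(\nabla_Z X,Y)+g(X,\nabla^*_Z Y)
\]
and then run a ``constant along the curve'' argument. First I would establish this identity from the definition $\nabla^*=2^{LC}\nabla-\nabla$. Writing $K(Z,X)=\nabla_Z X-{}^{LC}\nabla_Z X$ for the difference tensor, one has $\nabla^*_Z X={}^{LC}\nabla_Z X-K(Z,X)$, so that, using only metric compatibility of the Levi--Civita connection,
\[
    g(\nabla_Z X,Y)+g(X,\nabla^*_Z Y)=Z\,g(X,Y)+g(K(Z,X),Y)-g(X,K(Z,Y)).
\]
The two correction terms cancel because $g(K(Z,X),Y)$ is, up to a constant factor, the value of the totally symmetric Amari--Chentsov tensor $C(Z,X,Y)$ (this is exactly the content of the preceding Proposition, $C_{ijk}=\Gamma^*_{ijk}-\Gamma_{ijk}$); in particular it is symmetric in $X$ and $Y$, whence $g(K(Z,X),Y)=g(X,K(Z,Y))$ and the identity holds. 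By the symmetry of $g$ it may equivalently be written with the roles of $\nabla$ and $\nabla^*$ interchanged.

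Next I would parametrize $\gamma$ on $[0,1]$ with $\gamma(0)=x$ and $\gamma(1)=y$, and transport the two vectors along it: let $V(t)$ be the $\nabla^*$-parallel field with $V(0)=X$ (so $V(1)=\Pi^*X$ and $\nabla^*_{\dot\gamma}V=0$), and let $W(t)$ be the $\nabla$-parallel field with $W(0)=Y$ (so $W(1)=\Pi Y$ and $\nabla_{\dot\gamma}W=0$). Consider the scalar function $f(t)=g_{\gamma(t)}(V(t),W(t))$. Applying the duality identity with $Z=\dot\gamma$ in the form
\[
    \dot\gamma\,g(V,W)=g(\nabla^*_{\dot\gamma}V,W)+g(V,\nabla_{\dot\gamma}W),
\]
both terms on the right vanish by parallelism, so $f'(t)\equiv0$ and $f$ is constant. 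Evaluating at the endpoints gives $g_x(X,Y)=f(0)=f(1)=g_y(\Pi^*X,\Pi Y)$, which is the claim.

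The one point that needs care --- and the only real obstacle --- is matching the two parallel transports to the correct slots of the duality identity. The field $V$ is $\nabla^*$-parallel while $W$ is $\nabla$-parallel, so one must use the ``swapped'' version of the identity, in which $\nabla^*$ differentiates the first argument and $\nabla$ the second; the naive version produces two nonvanishing terms instead. This is why the symmetry of $g$, which lets us interchange the two connections in the duality relation, is the crucial ingredient rather than an afterthought. Everything else is routine: the fundamental theorem of calculus applied to $f$, together with the standard fact that a field whose covariant derivative vanishes along $\gamma$ is precisely the parallel transport of its initial value.
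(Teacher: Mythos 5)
Your proof is correct, but it takes a genuinely different route from the paper, for the simple reason that the paper offers no proof of this proposition at all: the statement is followed only by a remark noting that in Amari's monograph the transport identity $g_x(X,Y)=g_y(\Pi^*X,\Pi Y)$ is taken as the \emph{definition} of the dual connection, from which the paper's Definition 2 ($\nabla^*=2\,{}^{LC}\nabla-\nabla$) is then derived; the paper simply defers to that reference. You run the logic in the opposite direction and actually fill in the argument: from $\nabla^*=2\,{}^{LC}\nabla-\nabla$ you derive the infinitesimal duality identity $Zg(X,Y)=g(\nabla_Z X,Y)+g(X,\nabla^*_Z Y)$, then integrate it along $\gamma$ via the constant function $f(t)=g(V,W)$. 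Both steps are sound, and you correctly isolate the one nontrivial point: the cancellation $g(K(Z,X),Y)=g(X,K(Z,Y))$ is \emph{not} automatic for an arbitrary torsion-free $\nabla$ --- it is precisely the total symmetry of the lowered difference tensor, which in the paper's framework is supplied by Definition 1 together with Proposition 1 (explicitly, $K_{ijk}=\Gamma_{ijk}-{}^{LC}\Gamma_{ijk}=-\tfrac{1}{2}C_{ijk}$, so symmetry of $C_{ijk}$ in its last two indices is exactly what is needed). Your handling of the swapped form of the identity via symmetry of $g$ is also right, and the only cosmetic gap is that $V$ and $W$ live only along $\gamma$, so the duality identity should be read as a statement about covariant differentiation along a curve (or one extends $V,W$ locally) --- a routine point. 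What your approach buys is a self-contained proof within the paper's own axiomatics; what the paper's approach buys is brevity, at the cost of leaving the equivalence of the definitional and transport formulations of duality entirely to the cited monograph.
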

\begin{remark}
In \cite{amari}, this proposition is taken as the definition of the dual connection and the Definition \ref{dualconnection} is derived from it. In information geometry, usually the metric $g$ is not preserved by the connection $\nabla$ i.e.
\begin{equation*}
    g(X,Y)\neq g(\Pi X,\Pi Y).
\end{equation*}
Therefore, the motivation of the dual connection $\nabla^*$ is that working together, the connection and its dual preserve the metric
\begin{equation*}
    g(X,Y)=g(\Pi^* X,\Pi Y)
\end{equation*}
\end{remark}
If in addition $\nabla$ is a flat connection, i.e. the Riemann curvature tensor vanishes identically, then we have a dually flat structure.
\begin{proposition}[Dually flat]
If $\nabla$ is flat, then its dual $\nabla^*$ is also flat. In this case, the manifold $M$ is said to be dually flat.
\end{proposition}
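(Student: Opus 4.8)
The plan is to reduce the flatness of $\nabla^*$ to that of $\nabla$ through a pointwise algebraic identity relating their two curvature tensors $R$ and $R^*$. The bridge between the two connections is the metric $g$, so the natural starting point is the infinitesimal form of the preceding duality proposition, namely
\begin{equation*}
    Z\,g(X,Y)=g(\nabla_Z X,Y)+g(X,\nabla^*_Z Y),
\end{equation*}
valid for all vector fields $X,Y,Z$. This relation is what one obtains by differentiating $g_x(X,Y)=g_y(\Pi^*X,\Pi Y)$ along the curve $\gamma$, and it encodes exactly the duality that defines $\nabla^*$.

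First I would apply this duality twice to the second covariant derivative. Writing out $XY\,g(Z,W)$ by peeling off one $\nabla$ (and its $\nabla^*$ partner) at a time yields
\begin{equation*}
    XY\,g(Z,W)=g(\nabla_X\nabla_Y Z,W)+g(\nabla_Y Z,\nabla^*_X W)+g(\nabla_X Z,\nabla^*_Y W)+g(Z,\nabla^*_X\nabla^*_Y W).
\end{equation*}
Subtracting the same expression with $X$ and $Y$ interchanged, the two mixed cross terms cancel, and using the duality relation once more on $[X,Y]\,g(Z,W)$ to handle the bracket term, I would arrive at the key identity
\begin{equation*}
    g\bigl(R(X,Y)Z,W\bigr)+g\bigl(Z,R^*(X,Y)W\bigr)=0,
\end{equation*}
where $R$ and $R^*$ are the Riemann curvature tensors of $\nabla$ and $\nabla^*$. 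In words, lowering an index turns $R$ into the negative $g$-adjoint of $R^*$.

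With this identity in hand the conclusion is immediate: if $\nabla$ is flat then $R\equiv 0$, so $g\bigl(Z,R^*(X,Y)W\bigr)=0$ for every vector field $Z$; since $g$ is a Riemannian (hence nondegenerate) metric, this forces $R^*(X,Y)W=0$ for all $X,Y,W$, i.e. $R^*\equiv 0$ and $\nabla^*$ is flat. I expect the only real work to be the bookkeeping in the second step --- verifying that the cross terms $g(\nabla_Y Z,\nabla^*_X W)$ and $g(\nabla_X Z,\nabla^*_Y W)$ cancel under antisymmetrization and that the bracket contribution assembles correctly into $\nabla_{[X,Y]}$ and $\nabla^*_{[X,Y]}$. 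Once the curvature duality identity is established, nondegeneracy of $g$ does the rest; note moreover that the identity is symmetric in $\nabla$ and $\nabla^*$, so flatness of either connection in fact implies flatness of the other.
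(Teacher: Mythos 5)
Your proof is correct: the identity $g(R(X,Y)Z,W)+g(Z,R^*(X,Y)W)=0$, obtained by differentiating the duality relation twice and antisymmetrizing, together with nondegeneracy of $g$, is exactly the standard argument, and it is the one found in the reference (Amari's monograph) that the paper cites in lieu of giving a proof. The only point worth making explicit is that your starting identity $Z\,g(X,Y)=g(\nabla_Z X,Y)+g(X,\nabla^*_Z Y)$ is indeed the infinitesimal form of the paper's parallel-transport proposition, as you note, so your argument is self-contained within the paper's framework and supplies the details the paper outsources to the citation.
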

\begin{proof}
See page 137 in \cite{amari}.
\end{proof}
The final ingredient from information geometry is $\alpha$-connections.
\begin{definition}[$\alpha$-connections]
Let $(M,g,C)$ be a statistical manifold. The $\alpha$-connections are one-parameter connections
\begin{equation*}
    \Gamma^\alpha_{ijk}=^{LC}\Gamma_{ijk}+\alpha C_{ijk}.
\end{equation*}
\end{definition}
\begin{remark}
The definition in \cite{amari} is $\Gamma^\alpha_{ijk}=^{LC}\Gamma_{ijk}+\frac{\alpha}{2}C_{ijk}$. However, the factor $\frac{1}{2}$ does not play an important role.
\end{remark}
The following proposition collects some properties of the $\alpha$-connections
\section{Frobenius algebra}
This section serves as an introduction to Frobenius algebra and Frobenius manifold.
\begin{definition}[Frobenius algebra]
A Frobenius algebra is a triple $(A,g,\circ)$ such that the following holds:
\begin{enumerate}
    \item $(A,\circ)$ is a unital commutative associative algebra over a filed $K$;
    \item $g$ is a nondegenerate symmetric bilinear map such that
    $$
    g(X\circ{Y},Z)=g(X,Y\circ{Z}),
    $$
    where $X,Y,Z\in{A}$.
\end{enumerate}
\end{definition}
As to any algebra, it is useful to introduce structural constants to represent the product operation on a Frobenius algebra.
\begin{definition}[Structural constants]
Let $\{e_1,...,e_n\}$ be a basis of $A$ as a vector space over $K$. The structural constants for $A$ in this basis is defined via
$$
e_i\circ{e_j}=C^k_{ij}e_k.
$$
\end{definition}
\begin{remark}
These structural constants also appear in conformal field theory and is known as operator product expansion (OPE).
\end{remark}
Using the structural constants, we can explicitly write out the conditions on being a Frobenius algebra.
\begin{lemma}
Let $(A,g,\circ)$ be an algebra. Then
\begin{enumerate}
    \item $\circ$ is commutative if and only if
    \begin{equation}\label{commutative}
        C^k_{ij}=C^k_{ji};
    \end{equation}
    \item $\circ$ is associative if and only if
    \begin{equation}\label{associative}
        C_{ij}^lC_{lk}^m=C_{jk}^lC_{li}^m;
    \end{equation}
    \item $g(X\circ{Y},Z)=g(X,Y\circ{Z})$ if and only if
    \begin{equation}\label{invariance}
        C_{ijk}\coloneqq g_{lk}C^l_{ij}=C_{jki}.
    \end{equation}
\end{enumerate}
\end{lemma}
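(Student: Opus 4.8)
The plan is to verify each of the three equivalences by evaluating the defining algebraic condition on basis vectors and comparing coefficients. The two directions of each ``if and only if'' are then handled separately: the ``only if'' direction follows from linear independence of the basis $\{e_1,\dots,e_n\}$, while the ``if'' direction follows by extending the basis identity to arbitrary elements $X=X^ie_i$, $Y=Y^je_j$, $Z=Z^ke_k$ using the (bi- or tri-)linearity of $\circ$ and $g$. This multilinear reduction is the common backbone of all three parts.

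For part (1) I would expand $e_i\circ e_j=C^k_{ij}e_k$ and $e_j\circ e_i=C^k_{ji}e_k$; since the $e_k$ are independent, the two elements agree for all $i,j$ exactly when $C^k_{ij}=C^k_{ji}$, which is (\ref{commutative}). For part (2) I would compute the two associated triple products,
\begin{align*}
    (e_i\circ e_j)\circ e_k &= C^l_{ij}\,(e_l\circ e_k)=C^l_{ij}C^m_{lk}\,e_m,\\
    e_i\circ(e_j\circ e_k) &= C^l_{jk}\,(e_i\circ e_l)=C^l_{jk}C^m_{il}\,e_m,
\end{align*}
and equate the coefficients of $e_m$, giving $C^l_{ij}C^m_{lk}=C^l_{jk}C^m_{il}$.

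For part (3) I would evaluate the invariance condition on basis vectors, obtaining on the one hand
\begin{equation*}
    g(e_i\circ e_j,e_k)=C^l_{ij}\,g(e_l,e_k)=g_{lk}C^l_{ij}=C_{ijk},
\end{equation*}
which is just the definition in (\ref{invariance}), and on the other hand
\begin{equation*}
    g(e_i,e_j\circ e_k)=C^l_{jk}\,g(e_i,e_l)=g_{il}C^l_{jk}=g_{li}C^l_{jk}=C_{jki},
\end{equation*}
where I used the symmetry $g_{il}=g_{li}$. Hence the invariance of $g$ under $\circ$ is equivalent to $C_{ijk}=C_{jki}$. It is worth noting that only the symmetry and bilinearity of $g$ enter here, not its nondegeneracy.

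The computations are mechanical, so the real work is bookkeeping rather than any analytic difficulty. The one point that demands care is part (2): the naive expansion produces the factor $C^m_{il}$, whereas the stated identity (\ref{associative}) carries $C^m_{li}$, so the two agree only after invoking the commutativity established in part (1) (which is harmless in the Frobenius setting of interest). I would flag this index swap explicitly and, in each part, state the trivial-but-necessary step that the ``if'' direction must be propagated from the basis to general $X,Y,Z$ by linearity.
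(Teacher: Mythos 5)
Your proof is correct, and it is exactly the argument the paper intends: the lemma is stated there without proof, being regarded as the routine verification by expanding on basis vectors and extending by (multi)linearity, which is precisely what you did. Your flag on part (2) is a genuine and worthwhile catch: the raw expansion of associativity yields $C^l_{ij}C^m_{lk}=C^l_{jk}C^m_{il}$, whereas the printed identity (\ref{associative}) carries $C^m_{li}$, so the statement as written characterizes associativity only in the presence of commutativity (\ref{commutative}) --- harmless in the Frobenius setting, where the algebra is assumed commutative, but worth stating explicitly as you did.
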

\begin{remark}
By Eqn.(\ref{commutative}) and Eqn.(\ref{invariance}), $C_{ijk}$ is totally symmetric in all three indices. The Eqn.(\ref{associative}) is known in conformal field theory as the conformal bootstrap.
\end{remark}
With Frobenius algebra, we can define Frobenius manifold.
\begin{definition}[Frobenius manifold]
A Frobenius manifold is a smooth manifold $M$ such that the following holds:
\begin{enumerate}
    \item Each tangent space $T_xM$ is equipped with a Frobenius algebra $(T_xM,\circ_{x},g_x)$;
    \item The Levi-Civita connection $\nabla^{LC}$ is flat;
    \item The tensor $C(X,Y,Z)\coloneqq g_x(X\circ_xY,Z)$ is totally symmetric, where $X,Y,Z\in{T_xM}.$
\end{enumerate}
\end{definition}
The flatness of the Levi-Civita connection implies the coincidence of the covariant derivative and the usual partial derivative. Then Poincar\'e lemma implies that there exists locally a function $\Psi$ on $M$ such that
\begin{equation}
    C_{ijk}=\frac{\partial^3\Psi}{\partial{x^i}\partial{x^j}\partial{x^k}}.
\end{equation}
\begin{remark}
The function $\Psi$ is known as the Gromov-Witten potential in mathematical physics. The tensor $C_{ijk}$ obtained is known as the three-points Gromov-Witten invariants, correlation function, or the Yukawa coupling in the A-model of the topological field theory.
\end{remark}
With Gromov-Witten potential, we can introduce WDVV equation.
\begin{definition}[WDVV]
The WDVV equation for the Gromov-Witten potential $\Psi$ is
$$
\frac{\partial^3\Psi}{\partial x^i \partial x^j \partial x^a}g^{ab}\frac{\partial^3\Psi}{\partial x^b \partial x^k \partial x^l}=\frac{\partial^3\Psi}{\partial x^j \partial x^k \partial x^a}g^{ab}\frac{\partial^3\Psi}{\partial x^b \partial x^i \partial x^l}.
$$
\end{definition}
Finally, we introduce the last way to describe the Frobenius algebra structure on a manifold.
\begin{definition}[Dubrovin connections]
Let $\alpha\in\mathbb{R}$ be a parameter. The Dubrovin connections are a one-parameter family of connections $^D\nabla^\alpha$ defined by
$$
^{D}\nabla^{\alpha}_{X}(Y)\coloneqq^{LC}\nabla_{X}(Y)+\alpha X\circ Y.
$$
\end{definition}
\begin{remark}
The parameter $\alpha$ originates from the theory of integrable system and is called the spectral parameter there.
\end{remark}
The following proposition connects three characterizations of the Frobenius algebra.
\begin{proposition}\label{equivelence}
The following are equivalent
\begin{enumerate}
    \item $(A,g,\circ)$ is a Frobenius algebra;
    \item There exists Gromov-Witten potential $\Psi$ satisfying the WDVV equation;
    \item The Dubrovin connections $^D\nabla^{\alpha}$ are flat for any $\alpha\in\mathbb{R}$.
\end{enumerate}
\end{proposition}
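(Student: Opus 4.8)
The plan is to work throughout in local coordinates in which the Levi-Civita connection is trivialized, so that $^{LC}\nabla_{\partial_i}\partial_j=0$, the components $g_{ij}$ are constant, and covariant differentiation reduces to $\partial_i$. Such coordinates exist precisely because $^{LC}\nabla$ is flat. In them every condition occurring in the three items becomes a statement about the structural constants $C^k_{ij}$ of $\circ$ and their derivatives, and I would show that all three reduce to the same pair of core conditions: (a) \emph{potentiality}, the local existence of a function $\Psi$ with $C_{ijk}=\partial_i\partial_j\partial_k\Psi$; and (b) \emph{associativity}, Eqn.(\ref{associative}). Commutativity Eqn.(\ref{commutative}) and invariance Eqn.(\ref{invariance}) already make $C_{ijk}=g_{lk}C^l_{ij}$ totally symmetric, a fact I will use freely when raising and lowering indices with the constant metric.

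First I would establish $(1)\Leftrightarrow(2)$. Item $(2)$ presupposes a $\Psi$, hence (a); its content is then exactly (b). Indeed, raising an index with the constant inverse metric gives $C_{ija}g^{ab}=C^b_{ij}$, so the left-hand side of the WDVV equation is
\[
\frac{\partial^3\Psi}{\partial x^i\partial x^j\partial x^a}g^{ab}\frac{\partial^3\Psi}{\partial x^b\partial x^k\partial x^l}
=C^b_{ij}C_{bkl}=g_{lm}C^b_{ij}C^m_{bk},
\]
and likewise the right-hand side equals $g_{lm}C^b_{jk}C^m_{bi}$. Cancelling the nondegenerate factor $g_{lm}$ turns WDVV into $C^b_{ij}C^m_{bk}=C^b_{jk}C^m_{bi}$, which is Eqn.(\ref{associative}). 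Conversely, defining $\circ$ from a given $\Psi$ by $C^k_{ij}=g^{kl}\partial_i\partial_j\partial_l\Psi$ makes commutativity and total symmetry automatic from the symmetry of mixed partials, while WDVV supplies associativity, recovering the Frobenius algebra.

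Next I would treat $(3)$. In the flat coordinates the Dubrovin connection has coefficients $\alpha C^k_{ij}$, so its curvature is a polynomial in $\alpha$,
\[
{}^{D}R^{\,l}_{\;kij}
=\alpha\bigl(\partial_i C^l_{jk}-\partial_j C^l_{ik}\bigr)
+\alpha^{2}\bigl(C^m_{jk}C^l_{im}-C^m_{ik}C^l_{jm}\bigr).
\]
Flatness for \emph{every} $\alpha$ forces the two coefficients to vanish separately. Lowering $l$ with the constant metric, the $\alpha^{1}$ coefficient is $\partial_i C_{jkl}-\partial_j C_{ikl}$, whose vanishing is the closedness condition that, by the Poincaré lemma, is equivalent to (a). The $\alpha^{2}$ coefficient, lowered and rearranged by total symmetry, is the associativity defect $g_{lm}\bigl(C^b_{jk}C^m_{bi}-C^b_{ik}C^m_{bj}\bigr)$, equivalently the antisymmetrization of $C_{\bullet\bullet a}g^{ab}C_{b\bullet\bullet}$, so its vanishing is exactly (b) and coincides with the WDVV symmetry. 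Thus $(3)\Leftrightarrow(\text{a})\wedge(\text{b})\Leftrightarrow(2)$, and the chain through $(1)$ closes.

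The main obstacle is not the curvature bookkeeping but two pieces of structure that the WDVV equation and the Dubrovin curvature do not see directly. The first is the \emph{unit}: the Frobenius-algebra axiom in $(1)$ demands a two-sided identity for $\circ$, which I would carry along by identifying it with a distinguished vector field that is covariantly constant for $^{LC}\nabla$ (constant components in the flat frame); its flatness is an extra normalization that must be shown preserved under each reformulation but does not enter the curvature computation. The second, more delicate point is \emph{potentiality} itself: total symmetry of $C_{ijk}$ at each point does not by itself yield $C_{ijk}=\partial_i\partial_j\partial_k\Psi$, which additionally requires the symmetry of $\partial_l C_{ijk}$, i.e. the vanishing $\alpha^{1}$ coefficient above. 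Verifying that this integrability is genuinely available on the Frobenius-manifold side of $(1)$—so that the pointwise algebra structure globalizes to a potential compatible with the flat unit—is the one place where the argument needs care rather than routine manipulation.
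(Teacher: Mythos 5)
First, a point of context: the paper states this proposition with no proof at all (only a remark follows it), so your attempt is not competing against an in-paper argument and must stand on its own as a proof of Dubrovin's theorem. The core of what you do holds up: in coordinates flattening $^{LC}\nabla$, the curvature of $^{D}\nabla^{\alpha}$ expands as $\alpha\bigl(\partial_i C^l_{jk}-\partial_j C^l_{ik}\bigr)+\alpha^{2}\bigl(C^m_{jk}C^l_{im}-C^m_{ik}C^l_{jm}\bigr)$, flatness for all $\alpha$ kills each coefficient separately, the $\alpha^{2}$ piece is the associativity defect (for a commutative algebra, commutation of the multiplication operators is equivalent to associativity), and the $\alpha^{1}$ piece is the closedness that feeds the Poincar\'e lemma; likewise WDVV with a constant metric is exactly Eqn.(\ref{associative}). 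So your $(2)\Leftrightarrow(3)$ and the associativity half of $(1)\Leftrightarrow(2)$ are in order.

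The genuine gap is the one you half-acknowledge in your closing paragraph, and it cannot be closed by ``care'': with the definitions as given in this paper, $(1)\Rightarrow(2)$ is false, because potentiality is an independent axiom, not a consequence of the pointwise algebra structure. Counterexample: on $M=\mathbb{R}^2$ with the Euclidean metric and coordinates $(x,y)$, let $\partial_x$ be the unit and set $\partial_y\circ\partial_y=\partial_x+x\,\partial_y$. Each tangent space is then a unital commutative associative algebra with invariant pairing, $C_{ijk}$ is totally symmetric (the nonzero components are $C_{xxx}=1$, $C_{xyy}=1$ with its permutations, and $C_{yyy}=x$), and $^{LC}\nabla$ is flat, so every clause of the paper's Frobenius-manifold definition is satisfied; yet $\partial_x C_{yyy}=1\neq0=\partial_y C_{xyy}$, so $\partial_l C_{ijk}$ is not symmetric, no potential $\Psi$ exists, and $^{D}\nabla^{\alpha}$ is not flat for $\alpha\neq0$. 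The resolution is that Dubrovin's definition of a Frobenius manifold --- and hence any correct statement of this equivalence --- includes the additional axiom that $^{LC}\nabla C$ be totally symmetric in all four slots; item $(1)$ must be read with that axiom included, after which your $\alpha^{1}$ analysis supplies potentiality immediately and the chain closes. (The paper itself commits the same oversight just after its Frobenius-manifold definition, where it claims the Poincar\'e lemma alone produces $\Psi$; and the flat unit you mention is a smaller instance of the same phenomenon --- neither WDVV nor the curvature computation sees it, so it too must be imposed as an axiom or a normalization rather than derived.)
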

\begin{remark}
This proposition shows many facets of the Frobenius algebra. The first term is purely algebraic. The second term is from integrable systems. And the third term is differential-geometric, which shows a general phenomena that an integrable PDE also has a reformulation as the vanish of some curvature.
\end{remark}
Besides the usual covariant connection, we also need the notion of contravariant connection.
\begin{definition}[Contravariant connection]
Let $(M,g)$ be a Riemannian manifold with an affine connection $\nabla$. The contravariant connection with respect to $\nabla$ is defined by
\begin{align*}
    \nabla^*:T^*M\otimes T^*M&\longrightarrow T^*M,\\
    (\alpha,\beta)&\longmapsto\nabla_{\alpha^\sharp}\beta,
\end{align*}
where $\sharp$ is the canonical isomorphism between the cotangent space and the tangent space.
The Christoffel symbol of $\nabla^*$ is
\begin{equation*}
    \Gamma^{ij}_k=-g^{is}\Gamma^j_{sk}.
\end{equation*}
\end{definition}
\begin{remark}
The minus sign in the Christoffel symbol is a consequence of the formula for the covariant derivative of a 1-form.
\end{remark}
\section{Frobenius algebra and information geometry}
After the introduction of the information geometry and Frobenius algebra, it is already transparent that the dually flat structure is equivalent to a Frobenius algebra structure
\begin{proposition}
To a dually flat sturcture $(M,g,\nabla,\nabla^*)$ there associates with a Frobenius algebra structure.
\end{proposition}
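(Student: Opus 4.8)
The plan is to manufacture the Frobenius algebra directly on each tangent space $T_xM$ out of the data $(g,C)$ that the dually flat structure provides, and then to match it against the three equivalent characterizations of Proposition \ref{equivelence}. Since the first Proposition identifies the (totally symmetric) Amari--Chentsov tensor as $C_{ijk}=\Gamma^*_{ijk}-\Gamma_{ijk}$, the natural move is to keep $g$ as the nondegenerate symmetric bilinear form and to define the product $\circ_x$ by raising one index of $C$,
$$
e_i\circ_x e_j := C^k_{ij}\,e_k,\qquad C^k_{ij}:=g^{kl}C_{lij}.
$$
With this definition the total symmetry of $C_{ijk}$ does almost all of the bookkeeping for free: symmetry in the last two indices gives commutativity (\ref{commutative}), and cyclic symmetry gives the invariance (\ref{invariance}), that is $g(X\circ_x Y,Z)=g(X,Y\circ_x Z)$. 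So two of the three algebra axioms are immediate.

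Next I would produce the potential. Because $\nabla$ is flat, I can pass to $\nabla$-affine coordinates, in which $\Gamma_{ijk}\equiv 0$ and hence $C_{ijk}=\Gamma^*_{ijk}$. In a dually flat manifold the metric is a Hessian metric: there is a convex potential $\psi$ with $g_{ij}=\partial_i\partial_j\psi$, and differentiating once more gives $C_{ijk}=\partial_i\partial_j\partial_k\psi$. This exhibits $\psi$ as a Gromov--Witten potential in the sense of the Frobenius-manifold discussion, so by the equivalence (2)$\Leftrightarrow$(1) of Proposition \ref{equivelence} it would suffice to check that $\psi$ satisfies WDVV --- which is exactly the statement that the product $\circ_x$ is associative, condition (\ref{associative}).

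The associativity/WDVV step is where I expect the real obstacle to sit, and it is worth isolating cleanly. With the product defined as above, the Dubrovin connection ${}^{D}\nabla^{\alpha}={}^{LC}\nabla+\alpha\,\circ$ coincides with the information-geometric $\alpha$-connection $\Gamma^\alpha={}^{LC}\Gamma+\alpha C$, so by characterization (3) of Proposition \ref{equivelence} the Frobenius property is equivalent to flatness of ${}^{D}\nabla^{\alpha}$ for \emph{every} $\alpha$. Writing the curvature of a connection perturbed by a $(1,2)$-tensor, one finds it is quadratic in $\alpha$,
$$
R^{(\alpha)}=R^{(0)}+\alpha A+\alpha^2 B,
$$
where $A$ is the antisymmetrized ${}^{LC}\nabla$-derivative of $C$ and $B$ is the associator-type term $X\circ(Y\circ Z)-Y\circ(X\circ Z)$ built from $C^k_{ij}$. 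Dual flatness only supplies flatness at the two special values of $\alpha$ that return $\nabla$ and $\nabla^*$; this forces $A=0$ and pins $R^{(0)}$ to $B$, but it leaves the quadratic term $B$ --- precisely the obstruction to associativity --- undetermined. Thus the crux is to show $B\equiv 0$, i.e.\ that the Amari--Chentsov tensor of a dually flat structure genuinely satisfies WDVV; this does not follow from Hessian-ness alone and is the step that will need either an extra hypothesis or a sharper use of the dually flat structure. A secondary point I would have to settle is the existence of a unit for $\circ_x$, which the construction above does not automatically provide.
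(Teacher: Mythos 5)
Your construction is the same as the paper's: the paper's entire proof consists of the single line prescribing the structural constants $C^i_{jk}=\Gamma^i_{jk}-\Gamma^{*i}_{jk}$ (the AC tensor with one index raised, up to sign and scale), which is exactly your $C^k_{ij}=g^{kl}C_{lij}$. Your verification of commutativity (\ref{commutative}) and of the invariance (\ref{invariance}) from the total symmetry of $C_{ijk}$ is correct, and it is already more than the paper does: the paper's proof never checks associativity and never exhibits a unit. So the two points you flag as unresolved are not defects of your write-up relative to the paper --- they are precisely the gaps in the paper's own proof.

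Moreover, your obstruction analysis shows that the gap is real and cannot be closed by a cleverer argument. Given dual flatness, your expansion $R^{(\alpha)}=R^{(0)}+\alpha A+\alpha^2 B$, evaluated at the two values of $\alpha$ where flatness is supplied, gives $A=0$ and $B=-R^{(0)}$; since for a commutative product the vanishing of $B$ is equivalent to associativity (\ref{associative}), the algebra is associative if and only if the Levi-Civita connection of $g$ is flat. This fails for dually flat manifolds in general: the Gaussian family $N(\mu,\sigma^2)$ is dually flat (it is an exponential family), yet its Fisher metric is hyperbolic, so $R^{(0)}\neq 0$ and the product built from the AC tensor is not associative there. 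Equivalently, a Hessian potential, with $g_{ij}=\partial_i\partial_j\psi$ and $C_{ijk}=\partial_i\partial_j\partial_k\psi$, need not satisfy WDVV when the index is raised with the non-constant metric $\partial_i\partial_j\psi$; in Frobenius manifold theory the metric entering WDVV is constant in the flat coordinates, and this is exactly where the identification of $\psi$ with a Gromov--Witten potential breaks down. The unit is a genuine issue as well: a unit requires $e^iC_{ijk}=g_{jk}$, which is an overdetermined system that the paper's own quantum bosonic gas example does not satisfy. So the proposition holds as stated only if one adds the hypothesis that $g$ itself is flat (not merely $\nabla$ and $\nabla^*$), or else weakens ``Frobenius algebra'' by dropping associativity and the unit; your closing instinct that the step ``will need either an extra hypothesis or a sharper use of the dually flat structure'' is exactly right, and the Gaussian counterexample shows no sharper use will do.
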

\begin{proof}
To define a product on a tangent space, it suffices to prescribe its structural constants in a given basis $\{\partial_i\}$. The structural constants are given up a scalar multiple by
\begin{equation}
    C^i_{jk}=\Gamma^i_{ik}-\Gamma^{*i}_{jk},
\end{equation}
where $\Gamma^i_{ik}$ and $\Gamma^{*i}_{jk}$ are Christoffel symbols of $\nabla$ and $\nabla^*$.
\end{proof}
Conversely, to a Frobenius algebra sturcture, the Dubrovin connections give two dually flat connections. The only thing that needs to be proven is that the two connections given by the Dubrovin connections preserve the metric tensor $g$. The following proposition shows that is it the case.
\begin{proposition}
The canonical pairing $<,>:T_x^*M\times T_xM\rightarrow\mathbb{R}$ is preserved along the parallel transport given by the Dubrovin connection $^D\nabla^\alpha$. 
\end{proposition}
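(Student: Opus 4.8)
The plan is to reduce the claim to the standard fact that any linear connection on $TM$ induces a canonical connection on the dual bundle $T^{*}M$ with respect to which the duality pairing is parallel, and then to make this induced cotangent connection explicit in terms of $g$ and the product $\circ$. Concretely, I would fix a curve $\gamma$ with velocity $\dot\gamma$, take a vector field $V(t)$ parallel along $\gamma$ for ${}^{D}\nabla^{\alpha}$, and take a covector field $\omega(t)$ parallel for the induced connection $\widetilde{\nabla}$ on $T^{*}M$. The Leibniz rule defining $\widetilde{\nabla}$ gives
\[
\frac{d}{dt}\langle \omega, V\rangle = \langle \widetilde{\nabla}_{\dot\gamma}\omega, V\rangle + \langle \omega, {}^{D}\nabla^{\alpha}_{\dot\gamma}V\rangle = 0,
\]
so the pairing is constant along $\gamma$. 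This step is formal; the real content is to identify $\widetilde{\nabla}$ and to show that under the musical isomorphism $\sharp$ induced by $g$ it corresponds to the Dubrovin connection ${}^{D}\nabla^{-\alpha}$ with the opposite spectral parameter. This is exactly the statement flagged in the preceding paragraph, namely that $\big({}^{D}\nabla^{\alpha},{}^{D}\nabla^{-\alpha}\big)$ form a $g$-dual pair and hence preserve the metric dually.

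To establish that correspondence I would transport everything back to $TM$ with $g$ and verify the infinitesimal dual-compatibility identity
\[
X\, g(Y,Z) = g\big({}^{D}\nabla^{\alpha}_{X}Y,\, Z\big) + g\big(Y,\, {}^{D}\nabla^{-\alpha}_{X}Z\big).
\]
Expanding each Dubrovin connection as ${}^{LC}\nabla_{X}(\cdot) \pm \alpha\, X\circ(\cdot)$, the two Levi-Civita contributions recombine into $X\,g(Y,Z)$ by metric compatibility of ${}^{LC}\nabla$, while the remaining pieces collapse to the single term $\alpha\big[g(X\circ Y, Z) - g(Y, X\circ Z)\big]$.

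The crux is therefore the algebraic identity $g(X\circ Y, Z) = g(Y, X\circ Z)$, i.e. that multiplication by $X$ is self-adjoint with respect to $g$. This is immediate from the Frobenius axioms: commutativity gives $X\circ Y = Y\circ X$, and the invariance condition then yields $g(Y\circ X, Z) = g(Y, X\circ Z)$. Hence the bracketed term vanishes, the dual-compatibility identity holds, and preservation of the canonical pairing follows.

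I expect the main obstacle to be bookkeeping rather than depth. One must be careful that the covector is transported by the connection induced on $T^{*}M$ (carrying the sign-flipped Christoffel symbols recorded in the definition of the contravariant connection) while the vector is transported by ${}^{D}\nabla^{\alpha}$, and that under $\sharp$ this pairing of prescriptions genuinely yields the conjugate pair $\big({}^{D}\nabla^{\alpha}, {}^{D}\nabla^{-\alpha}\big)$ rather than one connection applied twice. Once the self-adjointness of $\circ$ is secured, the remaining computation is routine and simultaneously delivers the metric-preservation statement needed to complete the converse direction.
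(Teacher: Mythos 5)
Your argument is correct, but it takes a genuinely different route from the paper's, and in fact proves more. The paper's proof is exactly your ``formal step,'' carried out in coordinates: it transports the covector by the contravariant connection of ${}^D\nabla^\alpha$ (Christoffel symbols $-g^{is}\left(\Gamma^j_{sk}+\alpha C^j_{sk}\right)$), transports the vector by ${}^D\nabla^\alpha$ itself, and checks that the two contributions to $\partial_i\langle \mathrm{d}x^j,\partial_k\rangle$ cancel identically. Note that this cancellation is automatic for \emph{any} connection paired with its induced dual connection on $T^*M$: the paper's computation never invokes commutativity, associativity, or the invariance $g(X\circ Y,Z)=g(X,Y\circ Z)$, so the Frobenius structure plays no role in it. What you add --- identifying the induced cotangent connection, under the musical isomorphism, with the opposite-parameter Dubrovin connection via the identity
\begin{equation*}
    X\,g(Y,Z)=g\left({}^D\nabla^\alpha_XY,\,Z\right)+g\left(Y,\,{}^D\nabla^{-\alpha}_XZ\right),
\end{equation*}
proved from metric compatibility of ${}^{LC}\nabla$ together with the self-adjointness $g(X\circ Y,Z)=g(Y,X\circ Z)$ --- is precisely the substantive claim that the paragraph preceding the proposition actually needs, namely that the Dubrovin pencil yields a dually flat \emph{pair} $\left({}^D\nabla^\alpha,{}^D\nabla^{-\alpha}\right)$ in the information-geometric sense; it is also the only place where the Frobenius axioms genuinely enter. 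The paper itself only gestures at this identification much later, in the Discussion (``the dual connection may be identified within the pencil''), without proof. So the trade-off is: the paper's computation buys brevity and verifies the literal statement as written, at the cost of proving something that holds tautologically for any connection; your decomposition isolates the tautological part, makes explicit where commutativity and invariance of $\circ$ are used, and delivers the stronger statement that the converse direction (Frobenius $\Rightarrow$ dually flat) requires.
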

\begin{proof}
Let $\{\frac{\partial}{\partial x^i}\}$ denote a basis of the tangent plane $T_xM$ at some point $x\in M$ and $\{\textrm{d}x^i\}$ the corresponding dual basis of the cotangent plane $T_x^*M$. Then we need to prove that $\forall$i, j, and k
    \begin{equation*}
        \frac{\partial}{\partial x^i}\left<\textrm{d}x^j,\frac{\partial}{\partial x^k}\right>=0.
    \end{equation*}
This is shown by the direct computation
\begin{align*}
    &\frac{\partial}{\partial x^i}\left<\textrm{d}x^j,\frac{\partial}{\partial x^k}\right>=\left<g_{is}\nabla^{* \textrm{d}x^s}\textrm{d}x^j,\frac{\partial}{\partial x^k}\right>+\left<\textrm{d}x^j,\nabla_{\frac{\partial}{\partial x^i}}\frac{\partial}{\partial x^k}\right>\\
    &=\left<-g_{is}g^{st}\Gamma^j_{tr}\textrm{d}x^r-\alpha g_{is}g^{st}C^j_{tr}\textrm{d}x^r,\frac{\partial}{\partial x^k}\right>+\left<\textrm{d}x^j,\Gamma^r_{ik}\frac{\partial}{\partial x^r}+\alpha C^r_{ik}\frac{\partial}{\partial x^r}\right>\\
    &=-\delta^t_i\Gamma^j_{tk}-\alpha\delta^t_i C^j_{tk}+\Gamma^j_{ik}+\alpha C^j_{ik}\\
    &=0.
\end{align*}
\end{proof}
We have shown the equivalence of the Frobenius algebra and the dually flat structure in information geometry. What if the statistical manifold is not dually flat? It turns out that we may still define the statistical product, however the statistical product is not a Frobenius algebra anymore. We summarize this in the following theorem
\begin{theorem}
A statistical manifold $(M,g,\nabla)$ is not flat, if and only if the statistical product $\circ$ is not associative.
\end{theorem}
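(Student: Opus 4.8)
The plan is to prove the contrapositive equivalence: the statistical connection $\nabla$ is flat if and only if the product $\circ$ is associative. The bridge between the two sides is that $\nabla$ is one member of the Dubrovin/$\alpha$-family built from $^{LC}\nabla$ and the statistical product, so its curvature expands as a polynomial in the spectral parameter whose top-degree coefficient is exactly the associativity obstruction.

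First I would record that $\circ$ is commutative: since $\nabla$ and $\nabla^*$ are torsion-free, their Christoffel symbols are symmetric in the lower indices, so $C^i_{jk}=\Gamma^i_{jk}-\Gamma^{*i}_{jk}$ satisfies Eqn.(\ref{commutative}). I would then isolate the purely algebraic fact that, for a \emph{commutative} algebra, associativity is equivalent to the vanishing of the quadratic expression $C^m_{il}C^l_{jk}-C^m_{jl}C^l_{ik}$ for all indices. In operator language this says the left multiplications commute, $[L_{e_i},L_{e_j}]=0$. The forward direction is immediate from Eqn.(\ref{associative}) together with commutativity, while the converse follows from the short manipulation $(a\circ b)\circ c=c\circ(a\circ b)=a\circ(c\circ b)=a\circ(b\circ c)$, using commutativity in the outer steps and $[L_c,L_a]=0$ in the middle.

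Next I would compute the curvature of $\nabla$ viewed as the Dubrovin connection $^{D}\nabla^{\alpha_0}={}^{LC}\nabla+\alpha_0\circ$ at the nonzero value $\alpha_0$ for which $^{D}\nabla^{\alpha_0}=\nabla$. Substituting $\Gamma^{(\alpha_0)i}_{jk}={}^{LC}\Gamma^i_{jk}+\alpha_0 C^i_{jk}$ into the curvature tensor and collecting powers of $\alpha_0$ yields
\begin{equation*}
R^{(\alpha_0)p}{}_{kij}=R^{LC\,p}{}_{kij}+\alpha_0\!\left({}^{LC}\nabla_i C^p_{jk}-{}^{LC}\nabla_j C^p_{ik}\right)+\alpha_0^2\!\left(C^p_{im}C^m_{jk}-C^p_{jm}C^m_{ik}\right).
\end{equation*}
The quadratic coefficient is precisely the expression from the algebraic step, so (since $\alpha_0\neq 0$) it vanishes identically if and only if $\circ$ is associative. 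Working in affine coordinates for the flat Levi-Civita connection, where $R^{LC}\equiv0$ and $^{LC}\Gamma\equiv0$ and the potential representation $C_{ijk}=\partial_i\partial_j\partial_k\Psi$ forces the linear antisymmetrized-derivative term to vanish, the curvature of $\nabla$ collapses to its quadratic part, giving the desired equivalence.

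The main obstacle is controlling the two lower-order contributions $R^{LC}$ and the linear term, which on a wholly general statistical manifold need not vanish; if retained, flatness of $\nabla$ only forces $R^{LC}{}^{p}{}_{kij}=-\alpha_0^2(C^p_{im}C^m_{jk}-C^p_{jm}C^m_{ik})$ rather than the vanishing of the quadratic term by itself. The clean equivalence therefore relies on the dually-flat (Frobenius-manifold) regime of Section 4, in which $^{LC}\nabla$ is flat and $C$ derives from a Gromov--Witten potential: there the linear term drops by total symmetry of $\partial_l C_{ijk}$, and one may alternatively invoke the dual-flatness proposition (flatness of $\nabla$ forces that of $\nabla^*$) so that adding and subtracting $R^{\nabla}$ and $R^{\nabla^*}$ eliminates the linear term and, with $R^{LC}=0$, isolates the associativity condition. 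I would make this standing hypothesis explicit, after which the equivalence is immediate.
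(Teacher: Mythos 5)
Your proposal is correct in substance but takes a genuinely different route from the paper, and the difference is instructive. The paper's entire proof is a two-line appeal to Proposition \ref{equivelence}: flatness of $\nabla$ is identified with flatness of the Dubrovin pencil, which by that proposition is equivalent to the WDVV equation, i.e.\ to associativity. You instead expand the curvature of $\nabla={}^{D}\nabla^{\alpha_0}$ explicitly in powers of the spectral parameter,
\begin{equation*}
R^{(\alpha_0)}=R^{LC}+\alpha_0 L+\alpha_0^2 Q,\qquad Q^p{}_{kij}=C^p_{im}C^m_{jk}-C^p_{jm}C^m_{ik},\quad L^p{}_{kij}={}^{LC}\nabla_i C^p_{jk}-{}^{LC}\nabla_j C^p_{ik},
\end{equation*}
prove the algebraic lemma that for a commutative product $Q=0$ is equivalent to associativity (Eqn.(\ref{associative})), and then observe that flatness of the \emph{single} connection $\nabla$ kills the sum but not the individual coefficients. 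This makes visible exactly what the paper's citation hides: Proposition \ref{equivelence} concerns flatness of the pencil for \emph{all} $\alpha$, whereas the theorem hypothesizes flatness at one value only, and passing from one to the other requires $R^{LC}=0$ and $L=0$, neither of which holds on a general statistical manifold. Your caution here is not pedantry but a needed correction: the univariate Gaussian family is dually flat, so $R^{(\pm1)}=0$ gives $L=0$ and $Q=-R^{LC}$, yet its Fisher metric is hyperbolic, so $Q\neq0$ and the statistical product fails to be associative even though $\nabla$ is flat --- a counterexample to the statement read literally. Under your standing hypothesis (${}^{LC}\nabla$ flat and $C$ admitting a local potential, so that $L=0$) both directions close cleanly: forward via the dual-flatness proposition and $Q=-R^{LC}=0$, converse via $R^{(\alpha_0)}=\alpha_0^2Q$. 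What the paper's route buys is brevity; what yours buys is a proof that actually tracks which hypotheses are used, at the cost of re-deriving the curvature expansion that underlies Proposition \ref{equivelence}.
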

\begin{proof}
By Proposition \ref{equivelence}, we know the connection $\nabla$ is flat if and only if the potential $\Psi$ (defined locally) does not satisfy the WDVV equation. However the WDVV equation is the associativity condition of a Frobenius algebra.
\end{proof}
\section{Statistical product and Yukawa term}
From the previous section, we know that for a statistical manifold, we can define a product structure in its tangent space
\begin{definition}[Statistical product]
Let $(M,g,\nabla)$ be a statistical manifold, the statistical product $\circ$ on $T_xM$ for $x\in{M}$ is defined by
\begin{equation*}
    \partial_i\circ\partial_j\coloneqq\left(\Gamma^k_{ij}-^{LC}\Gamma^k_{ij}\right)\partial_k,
\end{equation*}
where $\{\partial_i\}$ is a basis of $T_xM$ and $\Gamma^k_{ij}$ and $^{LC}\Gamma_{ij}^k$ are Christoffel symbols of $\nabla$ and the Levi-Civita connection respectively.
\end{definition}
Clearly, the definition does not depend on the choice of the basis and after lowering index, $\Gamma^k_{ij}-^{LC}\Gamma^k_{ij}$ are in fact the components $C_{ijk}$ of the AC tensor up to a scalar factor. From the structural constant $\Gamma^k_{ij}-^{LC}\Gamma^k_{ij}$ of the statistical product or equivalently the AC tensor $C_{ijk}$, we construct the following scalar
\begin{definition}[Yukawa term]
The Yukawa term $C$ is defined as
\begin{equation*}
    C=C_{ijk}C^{ijk}-C_iC^i,
\end{equation*}
where $C_i=C^j_{ij}$ and $C^i=g^{ij}C_j$.
\end{definition}
We believe this scalar quantity contains some information of the statistical manifold just like the scalar curvature constructed from Riemann curvature tensor contains some information of the Riemannian manifold. Since the structural constant $C^i_{jk}$ of the Frobenius algebra play the role of Yukawa coupling in topological field theory, we coin the name Yukawa term.

We calculate two examples from statistical mechanics to show that the Yukawa term contains some useful information. These two examples are inspired by \cite{janyszek}, where they calculated the scalar curvature. The calculation below uses grand canonical ensemble.
\begin{example}[Classical ideal gas]
The free energy per volume is
\begin{equation}
    F = \eta\lambda^{-3},
\end{equation}
where
\begin{equation}
    \lambda=\frac{h}{\sqrt{2\pi mk_BT}}
\end{equation}
is the thermal wavelength, $T$ is the temperature, and $\eta$ is the fugacity. The constants $h$, $m$, $k_B$ appearing in the thermal wavelength are the Planck constant, the mass of a gas molecule, and the Boltzmann constant respectively.

The coordinate system of the statistical manifold is chosen as
\begin{align}
    \begin{split}
        &\beta=\frac{1}{T},\\
        &\gamma=-\ln{\eta}.
    \end{split}
\end{align}
In his coordinate system, the metric tensor induced by the free energy $F$ is
\begin{align}
    \begin{split}
        &g_{\beta\beta}=\frac{15}{4}\lambda^{-3}\beta^{-2}\eta,\\
        &g_{\beta\gamma}=g_{\gamma\beta}=\frac{3}{2}\lambda^{-3}\beta^{-1}\eta,\\
        &g_{\gamma\gamma}=\lambda^{-3}\eta.
    \end{split}
\end{align}
The structural constants of the Frobenius algebra are
\begin{align}
    \begin{split}
        &C_{\beta\beta\beta}=-\frac{105}{8}\lambda^{-3}\beta^{-3}\eta,\\
        &C_{\beta\beta\gamma}=C_{\beta\gamma\beta}=C_{\gamma\beta\beta}=-\frac{15}{4}\lambda^{-3}\beta^{-2}\eta,\\
        &C_{\gamma\gamma\beta}=C_{\gamma\beta\gamma}=C_{\beta\gamma\gamma}=-\frac{3}{2}\lambda^{-3}\beta^{-1}\eta,\\
        &C_{\gamma\gamma\gamma}=-\lambda^{-3}\eta.
    \end{split}
\end{align}
Note that all structural constants are linear in the fugacity $\eta$, which is physically reasonable because they all go to zero as the fugacity goes to zero.

After some calculation, the Yukawa term is
\begin{align}
    \begin{split}
            C&=C_{ijk}C^{ijk}-C_iC^i\\
            &=\frac{20}{3}\lambda^3\eta^{-1}-\frac{20}{3}\lambda^3\eta^{-1}\\
            &=0.
    \end{split}
\end{align}
\end{example}
\begin{remark}
This example shows that the Yukawa term is constantly zero for the classical ideal class, which is in agreement with our physical intuition since the classical ideal gas has no interaction at all.
\end{remark}
Next example is the quantum bosonic ideal gas. Even though the gas still has no interaction, we expect to see qualitatively different behaviour of the Yukawa term caused by quantum fluctuation. We need the polylogarithmic function
\begin{definition}[Polylogarithmic function]
The polylogarithmic function is defined as a Dirichlet series
\begin{equation*}
    \textrm{Li}_s(\eta)=\sum_{k=1}^\infty\frac{\eta^k}{k^s},
\end{equation*}
which converges for all $s\in\mathbb{C}$ if $|\eta|<1$ and can be analytically continued to the whole complex plane.
\end{definition}
From the Dirichlet series representation, it is easy to show the differential relation
\begin{equation}
    \frac{\mathrm{d}\mathrm{Li}_s(\eta)}{\mathrm{d}\eta}=\frac{\mathrm{Li}_{s-1}(\eta)}{\eta}.
\end{equation}
\begin{example}[Quantum bosonic ideal gas]
The free energy per volume is
\begin{equation}
    F = \mathrm{Li}_{\frac{5}{2}}(\eta)\lambda^{-3}.
\end{equation}
The metric tensor $g$ induced by the free energy $F$ is
\begin{align}
    \begin{split}
        &g_{\beta\beta}=\frac{15}{4}\lambda^{-3}\beta^{-2}\mathrm{Li}_{\frac{5}{2}}(\eta),\\
        &g_{\beta\gamma}=g_{\gamma\beta}=\frac{3}{2}\lambda^{-3}\beta^{-1}\mathrm{Li}_{\frac{3}{2}}(\eta),\\
        &g_{\gamma\gamma}=\lambda^{-3}\mathrm{Li}_{\frac{1}{2}}(\eta).
    \end{split}
\end{align}
The structural constants of the Frobenius algebra are
\begin{align}
    \begin{split}
        &C_{\beta\beta\beta}=-\frac{105}{8}\lambda^{-3}\beta^{-3}\mathrm{Li}_{\frac{5}{2}}(\eta),\\
        &C_{\beta\beta\gamma}=C_{\beta\gamma\beta}=C_{\gamma\beta\beta}=-\frac{15}{4}\lambda^{-3}\beta^{-2}\mathrm{Li}_{\frac{3}{2}}(\eta),\\
        &C_{\gamma\gamma\beta}=C_{\gamma\beta\gamma}=C_{\beta\gamma\gamma}=-\frac{3}{2}\lambda^{-3}\beta^{-1}\mathrm{Li}_{\frac{1}{2}}(\eta),\\
        &C_{\gamma\gamma\gamma}=-\lambda^{-3}\mathrm{Li}_{-\frac{1}{2}}(\eta).
    \end{split}
\end{align}
After a tedious calculation, the Yukawa term is
\begin{align}\label{yukawa}
    \begin{split}
        C=&\frac{20\lambda^3}{A^3}\left[5\mathrm{Li}^2_{\frac{5}{2}}(\eta)\mathrm{Li}_{\frac{3}{2}}(\eta)\mathrm{Li}_{\frac{1}{2}}(\eta)\mathrm{Li}_{-\frac{1}{2}}(\eta)\right.\\
        &-10\mathrm{Li}^2_{\frac{5}{2}}(\eta)\mathrm{Li}^3_{\frac{1}{2}}(\eta)-3\mathrm{Li}_{\frac{5}{2}}(\eta)\mathrm{Li}^3_{\frac{3}{2}}(\eta)\mathrm{Li}_{-\frac{1}{2}}(\eta)\\
        &\left.+11\mathrm{Li}_{\frac{5}{2}}(\eta)\mathrm{Li}^2_{\frac{3}{2}}(\eta)\mathrm{Li}^2_{\frac{1}{2}}(\eta)-3\mathrm{Li}^4_{\frac{3}{2}}(\eta)\mathrm{Li}_{\frac{1}{2}}(\eta)\right],
    \end{split}
\end{align}
where
\begin{equation}
    A = 5\mathrm{Li}_{\frac{5}{2}}(\eta)\mathrm{Li}_{\frac{1}{2}}(\eta)-3\mathrm{Li}^2_{\frac{3}{2}}(\eta).
\end{equation}
We consider the asymptotic behaviour of the Yukawa term $C$ as $\eta\rightarrow1^-$ or equivalently as $\gamma\rightarrow0^+$. For $s>1$, the values of polylogarithmic functions at $\eta=1$ coincide with the Riemann-zeta function evaluated at $s$, i.e.
\begin{equation}
    \mathrm{Li}_s(1)=\zeta(s).
\end{equation}
For $s<1$, the polylogarithmic functions has asymptotic property
\begin{equation}
    \mathrm{Li}_s(\eta)\sim\frac{\Gamma(1-s)}{(-\ln\eta)^{1-s}}
\end{equation}
as $\eta\rightarrow1^-$.

Hence
\begin{equation}
    C\sim\frac{2\zeta\left(\frac{3}{2}\right)\lambda^3}{5\sqrt{\pi}\zeta\left(\frac{5}{2}\right)\gamma^\frac{1}{2}}
\end{equation}
as $\gamma\rightarrow0^+$. This singularity of the Yukawa term $C$ can be seen as the onset of the Bose-Einstein condensation.

Next we consider the limit of $C$ as $\eta\rightarrow0^+$. By the series representation of the polylogarithmic function, we know
\begin{equation}
    \mathrm{Li}_s(\eta)\sim\eta
\end{equation}
as $\eta\rightarrow0^+$.
Hence
\begin{equation}
    C\rightarrow0
\end{equation}
as $\eta\rightarrow0^+$. This coincides with the classical ideal gas example.
\end{example}
The Equation (\ref{yukawa}) is complicated. However we show that it is nonnegative
\begin{proposition}
$C\geq0$ for $0\leq\eta\leq1$.
\end{proposition}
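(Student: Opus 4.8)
The plan is to strip away the complicated prefactor, factor the degree-five bracket in (\ref{yukawa}) once, and thereby reduce the whole proposition to a single transparent inequality about the four polylogarithms, which I will then attack by a correlation/convexity argument on their Dirichlet series. Throughout write $a=\mathrm{Li}_{5/2}(\eta)$, $b=\mathrm{Li}_{3/2}(\eta)$, $c=\mathrm{Li}_{1/2}(\eta)$, $d=\mathrm{Li}_{-1/2}(\eta)$, all strictly positive for $0<\eta<1$, and recall $A=5ac-3b^2$. I would first dispose of the prefactor: a direct computation from the metric gives $\det g=\tfrac34\lambda^{-6}\beta^{-2}A$, and since $g$ is Riemannian one has $\det g>0$, so $A>0$ on the whole domain (equivalently $A\ge 2b^2>0$ using the log-convexity bound $ac\ge b^2$ below). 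Hence $\operatorname{sgn}C=\operatorname{sgn}\bigl(\text{bracket}\bigr)$.

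Next I would factor the bracket. Using $A=5ac-3b^2$ one verifies the collapse
\begin{equation*}
5a^2bcd-10a^2c^3-3ab^3d+11ab^2c^2-3b^4c=A\bigl(abd+b^2c-2ac^2\bigr),
\end{equation*}
so that $C=\dfrac{20\lambda^3}{A^2}\bigl(abd+b^2c-2ac^2\bigr)$, and since $A^2>0$ the proposition is equivalent to $abd+b^2c-2ac^2\ge 0$. I would then rewrite this as
\begin{equation*}
abd+b^2c-2ac^2=a\,(bd-c^2)-c\,(ac-b^2).
\end{equation*}
Since $\mathrm{Li}_s(\eta)=\sum_k \eta^k k^{-s}$ is log-convex in $s$, both Turánians are nonnegative; concretely, symmetrizing in $(i,j)$ and applying AM-GM to the common power,
\begin{equation*}
ac-b^2=\tfrac12\sum_{i,j\ge1}\frac{\eta^{i+j}}{(ij)^{5/2}}\,(i-j)^2,\qquad bd-c^2=\tfrac12\sum_{i,j\ge1}\frac{\eta^{i+j}}{(ij)^{3/2}}\,(i-j)^2.
\end{equation*}
The claim thus becomes $a(bd-c^2)\ge c(ac-b^2)$, which is in turn equivalent to the statement that the ratio $\Phi(\eta)=\mathrm{Li}_{5/2}(\eta)\mathrm{Li}_{1/2}(\eta)/\mathrm{Li}_{3/2}(\eta)^2$ is nondecreasing on $(0,1)$, because $\Phi'=\bigl(b^2c+abd-2ac^2\bigr)/(\eta\,b^3)$.

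Finally I would prove the reduced inequality. Substituting the two square-sum representations and combining over the common weight gives the triple-series form
\begin{equation*}
abd+b^2c-2ac^2=\tfrac12\sum_{i,j,l\ge1}\frac{\eta^{i+j+l}}{(ijl)^{5/2}}\,(i-j)^2\,(ij-l^2),
\end{equation*}
which, in probabilistic language with $\Lambda_\eta(l)\propto \eta^l l^{-5/2}$ and cumulants $\kappa_n=\theta^n\log\mathrm{Li}_{5/2}(\eta)$ for $\theta=\eta\,d/d\eta$, is the equivalent cumulant inequality $\kappa_1\kappa_3\ge 2\kappa_2^2$ (that is, $\mu\,\kappa_3\ge 2\sigma^4$ for the mean, variance and third central moment of $\Lambda_\eta$). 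This last step is the main obstacle. The symmetrized summand $(i-j)^2(ij-l^2)$ is sign-indefinite — it is already negative at $(i,j,l)=(2,2,1)$ — so the series cannot be shown nonnegative term by term, and the specific weight $l^{-5/2}$ is essential: for a symmetric law $\kappa_3=0<2\kappa_2^2$, so the inequality is simply false for a general positive measure, and only the (quantitatively strong) positive skewness of $\Lambda_\eta$ saves it. I would therefore establish the global estimate by splitting the range: for small $\eta$ the leading contribution is $2^{-5/2}\eta^4>0$ with the remainder controlled by the geometric decay of $\eta^k$; for $\eta\to1^-$ the asymptotics $\mathrm{Li}_s(\eta)\sim\Gamma(1-s)(-\ln\eta)^{s-1}$ make the $abd$ term dominant and force the difference to $+\infty$; and the intermediate range is closed using monotonicity and convexity bounds on the cumulants $\kappa_n(\eta)$. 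It is precisely this middle-range control of $\kappa_1\kappa_3-2\kappa_2^2$ that I expect to require the most care.
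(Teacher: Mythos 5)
Your reductions are correct, and they are genuinely sharper than what the paper does. The identity $\det g=\tfrac34\lambda^{-6}\beta^{-2}A$ does give $A>0$ rigorously (the paper instead asserts positivity of the Taylor coefficients of $A$ after exhibiting only the first few), and the factorization
$5a^2bcd-10a^2c^3-3ab^3d+11ab^2c^2-3b^4c=A\,(abd+b^2c-2ac^2)$
checks out by direct expansion, so $C=\frac{20\lambda^3}{A^2}\,(abd+b^2c-2ac^2)$. This is a real structural discovery that the paper misses: it collapses the quintic bracket to a Tur\'an-type expression, and it is consistent with the paper's own asymptotics (keeping only the dominant term $abd$ as $\eta\to1^-$ reproduces exactly $C\sim 2\zeta(3/2)\lambda^3/(5\sqrt{\pi}\,\zeta(5/2)\gamma^{1/2})$). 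The square-sum representations of $ac-b^2$ and $bd-c^2$, the equivalence with monotonicity of $\Phi=ac/b^2$, and the translation into the cumulant inequality $\kappa_1\kappa_3\ge2\kappa_2^2$ for the law $\Lambda_\eta(l)\propto\eta^l l^{-5/2}$ are all verified correctly. By contrast, the paper never factors anything: its proof consists of expanding the bracket and $A$ in powers of $\eta$, listing three coefficients of each, and asserting that all remaining coefficients are positive.

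However, your proof is incomplete at precisely the step that carries all the content: the inequality $abd+b^2c-2ac^2\ge0$ is never established. As you note yourself, the summand $(i-j)^2(ij-l^2)$ in your triple series is sign-indefinite (though your witness $(2,2,1)$ actually gives $0$; take $(1,2,2)$ or $(1,2,3)$ for a negative term), and the defect is worse than you state: even averaging over all six permutations of $(i,j,l)$ leaves a strictly negative contribution on the orbit of $(1,2,3)$, namely $2\left[(1-2)^2(2-9)+(2-3)^2(6-1)+(3-1)^2(3-4)\right]=-12$, so no orbit-by-orbit positivity argument can work; positivity of each Taylor coefficient arises only from cancellation between orbits (at order $\eta^6$, the orbit of $(1,1,4)$ outweighs that of $(1,2,3)$). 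Your proposed three-range strategy is only a plan: the small-$\eta$ and $\eta\to1^-$ analyses constrain nothing in between, and the ``monotonicity and convexity bounds on the cumulants'' that are supposed to close the middle range are neither stated nor proved --- you concede exactly this. So what you have actually proved is the proposition near $\eta=0$ and near $\eta=1$ only. To be fair, the paper's own proof has a gap of the same nature (positivity of infinitely many Taylor coefficients claimed from a finite computation), but your write-up explicitly leaves its key inequality open, so it cannot stand as a proof. The most natural way to finish within your framework would be to prove that for every $n$ the coefficient $\tfrac12\sum_{i+j+l=n}(i-j)^2(ij-l^2)(ijl)^{-5/2}$ is nonnegative --- which is the paper's unproven claim transported through your factorization --- or to find a genuine argument for $\kappa_1\kappa_3\ge2\kappa_2^2$ exploiting the specific weight $l^{-5/2}$, which, as your symmetric-law counterexample shows, cannot follow from positivity alone.
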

\begin{proof}
The Taylor expansion of the bracket in the Equation (\ref{yukawa}) satisfies the inequality
\begin{equation}
    \frac{C(\eta)A^3}{20\lambda^3}>0.33\eta^6+1.29\eta^7+2.85\eta^8+\dots\geq0,
\end{equation}
where all coefficients in the expansion is positive.

Similarly, the Taylor expansion of $A(\eta)$ satisfies the inequality
\begin{equation}
    A(\eta)>2\eta^2+2.2\eta^3+2.3\eta^4+\dots\geq0,
\end{equation}
where all coefficients in the expansion are positive.

It has been shown that $C(0)=0$. The proposition is proved.
\end{proof}
\begin{remark}
This example shows that the Yukawa term for the bosonic ideal gas is nonnegative and the divergence behavious indicates the Bose-Einstein condensation. Furthermore, in classical limit, the Yukawa term goes to $0$, which is in agreement with the classical ideal gas.
\end{remark}
\section{Discussion}
We have shown the intimate relationship between information geometry and Frobenius algebra. First of all, the physical meaning of the AC tensor is clear: It represnets the 3-points function, either in conformal field theory or in topological field theory. From this, we equipped a multiplication structure on the tangent space of a statistical manifold. The commutativity, associativity, and compatibility with metric forces the statistical manifold to have a pencil of flat connections, the so-called Dubrovin connections. We have shown that this pencil of connections entails the dually flat structure in information geometry: The dual connection is simply the contravariant connections associated with the pencil of flat connections. If we do not distinguish covariance and contravariance, the dual connection may be identified within the pencil.

We also have attempted to derive a scalar quantity, which we call Yukawa term, from the AC tensor. We chose this name for two reasons. First, the AC tensor is known as the Yukawa coupling in topological field theory; second, we calculated two examples from statistical mechanics and we found that the Yukawa terms were nonnegative. The nonnegativity qualifies the name Yukawa, since mass should be nonnegative. Interestingly, the Yukawa term for the classical ideal gas is identically zero while the Yukawa term for the quantum bosonic ideal gas indicates the onset of the Bose-Einstein condensation as $\eta\rightarrow0$.

Frobenius algebra is a mathematical concept that unifies different fields, for example integrable system, conformal field theory, Gromov-Witten theory, enumerative geometry etc. Due to the completely different language and background these theories assume, it is difficult to study their full relationship with information geometry here. However, it is certain that some structures and results from these fields have counterparts in information geometry. Inspired by the well-known facts from:
\begin{enumerate}
    \item Tree level conformal field theory that the $n$-points correlation function may be expressed by $3$-points correlation functions;
    \item Gromov-Witten theory, the Reconstruction theorem by Kontsevich and Manin \cite{kontsevich} says that the $n$-points Gromov-Witten invariants can be recursively calculated by the $3$-points Gromov-Witten invariant if the second cohomology group generates the cohomology ring as an algebra;
\end{enumerate}
we ask if there is any reconstruction-type result in information geometry:
\begin{problem}
For what statistical manifold is the (local) potential determined entirely by the metric (2-points function) and the AC tensor (3-points function)?
\end{problem}
Also, the Frobenius algebra may be seen as the $A$- side or the symplectic side of the mirror symmetry. So one may ask 
\begin{problem}
What is the $B$-side of the information geometry?
\end{problem}
Zhang and Khan have done some work toward this question \cite{zhang}.
\bibliographystyle{plain}
\bibliography{references.bib}

\begin{thebibliography}{10}

\bibitem{amari}
S.~Amari.
\newblock {\em Information goemetry and its applications}, volume 194 of {\em
  Applied Mathematical Sciences}.
\newblock Springer Japan, 2016.

\bibitem{dubrovin}
B.~Dubrovin.
\newblock Integrable systems in topological field theory.
\newblock {\em Nucl. Phys. B}, 379(3):627 -- 689, 1992.

\bibitem{dubrovin2}
B.~Dubrovin.
\newblock Painlev\'e transcendents in two-dimensional topological field theory.
\newblock In R.~Conte, editor, {\em The Painlev\'e Property One Century Later},
  CRM Series in Mathematical Physics, pages 287--412. Springer, 1999.

\bibitem{dubrovin3}
B~Dubrovin and Y.~Zhang.
\newblock Bihamiltonian hierarchies in 2d topological field theory at one-loop
  approximation.
\newblock {\em Commun. Math. Phys.}, 198(2):311--361, 1998.

\bibitem{fuchs}
J.~Fuchs, I.~Runkel, and C.~Schweigert.
\newblock Conformal correlation functions, frobenius algebras, and
  triangulations.
\newblock {\em Nucl. Phys. B}, 624(3):452--468, 2002.

\bibitem{janyszek}
H.~Janyszek and R.~Mrugaa.
\newblock Riemannian geometry and stability of ideal quantum gases.
\newblock {\em Journal of Physics A: Mathematical and General}, 23(4):467--476,
  1990.

\bibitem{kontsevich}
M.~Kontsevich and Yu. Manin.
\newblock Gromov-witten classes, quantum cohomology, and enumerative geometry.
\newblock {\em Comm. Math. Phys.}, 164(3):525--562, 1994.

\bibitem{ruan}
Y.~Ruan and G.~Tian.
\newblock A mathematical theory of quantum cohomology.
\newblock {\em J. Differential Geom.}, 42(2):259--367, 1995.

\bibitem{saito}
K.~Saito and A.~Takahashi.
\newblock From primitive forms to frobenius manifold.
\newblock In R.~Y. Donagi and K.~Wendland, editors, {\em From Hodge theory to
  integrability and TQFT $tt^*$-geometry}, pages 31--48. AMS, 2008.

\bibitem{zhang}
J.~Zhang and G.~Khan.
\newblock Statistical mirror symmetry.
\newblock {\em Differential Geometry and its Applications}, 73:101678, 2020.

\end{thebibliography}
\end{document}